\def\supp{\operatorname{supp}}
\newcommand{\CC}{{\mathbb{C}}}
\numberwithin{equation}{section}
\newcommand\lsdots{\hbox to 0.75em{.\hss.\hss.}}
\newcommand\lssdots{\hbox to 0.45em{.\hss.\hss.}}
\newcommand\csdots{\hbox to 0.75em{\ensuremath{\cdot}\hss\ensuremath{\cdot}\hss\ensuremath{\cdot}}}
\def\var{\mathrm{Var}}
\def\tr{\mathrm{Tr}}
\newcommand{\Ti}{{T^1_k}}
\newcommand{\Tii}{{T^2_k}}
\newcommand{\rk}{{T^*_k}}
\newtheorem{theorem}{Theorem}[section]
\newtheorem{lemma}[theorem]{Lemma}
\newtheorem{prop}[theorem]{Proposition}
\newtheorem{cor}[theorem]{Corollary}
\newtheorem{rem}[theorem]{Remark}
\newcommand{\der}{\nabla}
\author[A. Haimi]{Antti Haimi}
\email{antti.haimi@univie.ac.at}
\address{Faculty of Mathematics, University of Vienna,
Oskar-Morgenstern-Platz 1, A-1090 Vienna, Austria}
\author[J. L. Romero]{Jos\'{e} Luis Romero}
\email{jose.luis.romero@univie.ac.at}
\address{Faculty of Mathematics, University of Vienna,
Oskar-Morgenstern-Platz 1, A-1090 Vienna, Austria\\
and
Acoustics Research Institute, Austrian Academy of Sciences, Wohllebengasse
12-14 A-1040, Vienna, Austria}
\thanks{ A.\ H.\ and J.\ L.\ R.\ gratefully acknowledge support from the Austrian Science Fund (FWF): Y 1199.}
\begin{document}
\title{Normality of smooth statistics for planar determinantal point processes}

\keywords{asymptotic normality, determinantal point process, linear statistics, Weyl-Heisenberg DPP}
\subjclass[2020]{60F05, 60F17, 30H05, 60G55, 60B20, 82B31,46C07}

\begin{abstract}
We consider smooth linear statistics of determinantal point processes on the complex plane, and their large scale asymptotics. We prove asymptotic normality in the finite variance case, where Soshnikov's theorem is not applicable.
The setting is similar to that of Rider and Vir\'ag 
[Electron. J. Probab., 12, no. 45, 1238--1257, (2007)]
for the complex plane, but replaces analyticity conditions by the
assumption that the correlation kernel is reproducing. Our proof is a streamlined version of that of Ameur, Hedenmalm and Makarov [Duke Math J., 159, 31--81, (2011)]
for eigenvalues of normal random matrices. In our case, the reproducing property is brought to bear to compensate for the lack of analyticity and radial symmetries.
\end{abstract}

\maketitle
\section{Introduction}
\subsection{Determinantal point processes}
Determinantal point processes (DPP's) are certain interacting particle systems which exhibit repulsion between different points
\cite{soshnikov2000determinantal, MR2932631, MR2581882, MR2552864}. Their central feature is that all the statistical information is encoded in a single function, called correlation kernel. Since their introduction by Macchi in 1975 \cite{macchi1975coincidence}, DDP's have found extensive applications in areas of knowledge as varied as random matrix theory, number theory, quantum mechanics and machine learning.

For a precise definition, we restrict attention to the Euclidean space and consider a random subset $\mathcal{X} \subset \mathbb{R}^d$ encoded as a random integer-valued positive Radon measure $E \mapsto \mathcal{X}(E)$, which counts the number of points within a Borel set. We only consider \emph{simple} processes, that is, almost surely $\mathcal{X}$ assigns mass at most one to each singleton $\{x\}$. Such a process is said to admit \emph{correlation functions} (with respect to the Lebesgue measure) if for each $n \in \mathbb{N}$, there exists a function $\rho_n: (\mathbb{R}^d)^n \to [0, \infty)$ such that
for all mutually disjoint Borel sets $E_1,\ldots,E_n \subset \mathbb{R}^d$,
$$
\mathbb{E} \Big[\prod_{j=1}^n X(E_j) \Big] = \int_{E_1 \times \cdots \times E_n} \rho_n(x_1,\ldots, x_n) \,dx_1 \ldots dx_n.
$$
In addition, $\rho_n(x_1,\ldots,x_n)$ is required to vanish if $x_j=x_{k}$ for some $j\not=k$. The correlation functions $\rho_k$, also called \emph{joint intensities}, describe the essential statistical properties of $\mathcal{X}$. A simple point process $\mathcal{X}$ is called \emph{determinantal} if there exists a function $K: \mathbb{R}^d \times \mathbb{R}^d \to \mathbb{C}$, called \emph{correlation kernel}, such that
$$
\rho_n(x_1,\ldots, x_n) = \det K(x_k,x_j)_{k,j=1,\ldots, n}, \qquad x_1, \ldots, x_n \in \mathbb{R}^d.
$$
One of the most famous DDP's is the infinite  \emph{Ginibre ensemble} \cite{MR173726} on $\mathbb{C}^r\simeq\mathbb{R}^{2r}$, which has correlation kernel
\begin{align}\label{eq_gin_d}
K(z,w)= \frac{1}{\pi^r} \exp\Big[{\sum_{j=1}^r z_j \bar w_j-\frac12 |z_j|^2-\frac12|w_j|^2}\Big], \qquad z,w \in \mathbb{C}^r.
\end{align}
The Macchi-Soshnikov theorem \cite{macchi1975coincidence} provides a machinery to produce examples of DPP's: any Hermitian symmetric kernel $K: \mathbb{R}^d \times \mathbb{R}^d \to \mathbb{C}$ which induces a locally trace class integral operator with spectrum in the interval $[0,1]$ is the correlation kernel of a uniquely determined DPP. A simple way to satisfy the spectral assumption is to let $\mathcal{H} \subset L^2(\mathbb{R}^d)$ be a
\emph{reproducing kernel subspace} - that is, a closed subspace where evaluations are continuous functionals - and let $K$ be 
its \emph{reproducing kernel}, that is, the integral kernel representing the orthogonal projection $L^2(\mathbb{R}^d) \to \mathcal{H}$. As such, the kernel satisfies the \emph{reproducing formula}
\begin{align}\label{eq_rf}
K(x,y) = \int K(x,u) K(u,y) \, du.
\end{align}
The Ginibre kernel \eqref{eq_gin_d} in one complex dimension $r=1$ corresponds in this way to the weighted Bargmann-Fock space $\mathcal{H}=\mathcal{F}_0(\mathbb{C})$,
\begin{align}\label{eq_f0}
\mathcal{F}_1(\mathbb{C}) = \Big\{ F(z) e^{-|z|^2/2}: F:\CC\to\CC \text{ entire, } \int_{\mathbb{C}} |F(z)|^2e^{-|z|^2}dA(z) < \infty\Big\},
\end{align}
where $dA$ is the Lebesgue (area) measure. Further examples, important in high energy physics, are the polyanalytic Ginibre ensembles \cite{haimi2013polyanalytic, shirai2015ginibre} associated with the weighted poly-Bargmann spaces \cite{vas00}
$\mathcal{H}=\mathcal{F}_k(\mathbb{C})$,
\begin{align}\label{eq_fk}
\mathcal{F}_k(\mathbb{C}) = \Big\{ F(z) e^{-|z|^2/2}: \bar{\partial}^k F = 0, \int_{\mathbb{C}} |F(z)|^2e^{-|z|^2}dA(z) < \infty\Big\},
\end{align}
or with the pure poly-Bargmann spaces $\mathcal{F}_k(\mathbb{C}) \ominus \mathcal{F}_{k-1}(\mathbb{C})$ \cite{vas00}.

\subsection{Linear statistics and asymptotic normality}
One of the main topics of interest around DPP's has been the validity of central limit theorems for various statistical observations. In this article we will be interested in the \emph{linear statistic}
\begin{align*}
\tr(f) := \sum_{x \in \mathcal{X}} f(x)
\end{align*}
associated with a \emph{smooth} function $f: \mathbb{R}^d \to \mathbb{C}$ and a DPP $\mathcal{X}$. Specifically, we will consider large scale asymptotics of linear statistics
\begin{align}\label{eq_trr}
\tr_\rho(f) = \sum_{x \in\mathcal{X}_\rho} f(x)
\end{align}
calculated under increasing contractions of the point process
\begin{align}\label{eq_dil}
\mathcal{X}_\rho = \big\{ \tfrac{1}{\sqrt{\rho}} x: x \in \mathcal{X} \big\},
\qquad \rho \to \infty,
\end{align}
or, more generally, under qualitatively similar asymptotic deformations.
The corresponding expectations are given by
\begin{align*}
\mathbb{E}\big[\tr_\rho(f)\big] = \int_{\mathbb{R}^d} f(x) K_\rho(x,x)\,dx,
\end{align*}
where $K_\rho$ is the correlation kernel of $\mathcal{X}_\rho$. In the model case \eqref{eq_dil},
\begin{align}\label{eq_e}
\mathbb{E}\big[\tr_\rho(f)\big] = \rho^{d/2} \int_{\mathbb{R}^d} f(x) K(\sqrt{\rho} x, \sqrt{\rho} x) \,dx,
\end{align}
where $K$ is the correlation kernel of $\mathcal{X}$.

Soshnikov's celebrated central limit theorem \cite[Theorem 1]{soshnikov2002gaussian}, which builds on \cite{costin1995gaussian}, gives very general conditions under which the standardized linear statistic
\begin{align}\label{eq_st}
\widetilde{\tr_\rho}(f) :=
\frac{ \tr_\rho (f) - \mathbb{E} \big[\tr_\rho (f)\big]}{\sqrt{ \var \big[\tr_\rho (f)}\big]}
\end{align}
converges in distribution to a normal variable. The key assumption is that the expected value and variance of $\tr_\rho(f)$ be related by
\begin{equation} \label{eq:variance_cond}
\mathbb{E} \big[\tr_\rho |f|\big] = 
\mathrm{O} \big(\var [\tr_\rho (f)] ^\delta\big),
\mbox{ as } \rho \to \infty,
\end{equation}
for some $\delta>0$, while $\var [\tr_\rho (f)] \to \infty$.

Soshnikov's theorem immediately implies, for example, that the standardized point counting statistics for the Ginibre ensemble \eqref{eq_gin_d}, where $f$ is the indicator function of a smooth domain, are asymptotically normal. For smooth statistics, the Ginibre ensemble \eqref{eq_gin_d} satisfies
the key relation \eqref{eq:variance_cond} only in complex dimension $r>1$. While standardized smooth linear statistics of the Ginibre ensemble are also asymptotically normal in one complex dimension, the planar case requires ad hoc arguments \cite{MR2346510}.

\subsection{Asymptotic normality for planar DPP's}
Rider and Vir\'ag introduced an abstract model to study the normality of standardized smooth statistics of DDP's on certain Riemann surfaces \cite{MR2346510}. The central assumption is an approximate form of analyticity of the correlation kernel, and applies to the planar Ginibre ensemble, as well as to its analogues on the sphere and unit disk.

A second technique was introduced by Ameur, Hedenmalm and Makarov to study the normality of fluctuations of eigenvalues of certain ensembles of random matrices under a confining potential of increasing strength \cite{ameur2011fluctuations, ameur2010berezin} (see also Berman's work in several complex variables \cite{MR3903320}). While the corresponding correlation kernels are analytic, they do not have an explicit formula that would facilitate the approach of \cite{MR2346510, rider2007noise}. 

Beyond the analytic setting, normality of fluctuations of smooth statistics was shown to hold for certain planar DDPs in \cite{haimi2016central}. Here, the correlation kernel is \emph{polyanalytic} and the key insight is that the cumulants of \eqref{eq_trr} can be related by precise algebraic formulas to those of the analytic Ginibre ensemble, essentially reducing the problem to \cite{rider2007noise}.

The first goal of this article is to present a general result on asymptotic normality of smooth statistics for planar DPP's with correlation kernels that satisfy the \emph{reproducing property} \eqref{eq_rf}. 
The result we derive neither requires analyticity nor relies on algebraic relations and symmetries linking the correlation kernel to the Ginibre one. Our main insight is that the technique from \cite{ameur2011fluctuations} can be streamlined to 
substitute many of the cancellations attributed to analyticity and radiality to instead rely on the reproducing property. The reproducing assumption is to a great extent optimal, since under mild assumptions non-reproducing kernels fall within the scope of Soshnikov's theorem (see Sections \ref{sec_res} and \ref{sec_b}).

The result we present can be applied for example to infinite poly-analytic Ginibre ensembles considered with respect to general potentials, that is, infinite analogues of the finite ensembles introduced in \cite{haimi2013polyanalytic}. These are related to Hilbert spaces analogous to \eqref{eq_fk} but with respect to general weights
$Q: \CC\to [0,\infty)$:
\begin{align}\label{eq_m}
	\mathbb{H} = \Big\{ F(z) e^{-Q(z)/2}: \bar{\partial}^k F = 0, \int_{\mathbb{C}} |F(z)|^2e^{-Q(z)}\,dA(z) < \infty\Big\},
\end{align}
and cannot be related to their analytic counterparts by simple algebraic formulae, as done for standard potentials in \cite{haimi2016central}.

A second motivating application is to the kernels arising from the Schr\"odinger representation of the Weyl-Heisenberg group. Let $g: \mathbb{R} \to \mathbb{C}$ be a smooth and fast decaying function satisfying $\int_{\CC} |g|^2\,dA=1$. Then
\begin{align}\label{eq_wh}
K_g(z,w) = \int_{\mathbb{R}} \overline{g(t-x_1)}g(t-x_2) e^{2\pi i t (y_2-y_1)} \,dt, \qquad z=x_1 + i y_1, w= x_2+iy_2 \in \mathbb{C},
\end{align}
is a locally trace class reproducing kernel acting on $L^2(\CC,dA)$
\cite[Chapters 1 and 2]{folland89}. Non-smooth linear statistics for the associated DPP
have been considered in \cite{abreu2016weyl, matsui2021local}, providing variance asymptotics. Under mild hypotheses, non-smooth statistics satisfy \eqref{eq:variance_cond}, and, as with the Ginibre ensemble, Soshnikov's theorem readily shows that standardized statistics corresponding to the number of points within a growing disk are asymptotically normal (while the investigation of precise rates for such convergence is more challenging \cite{MR4418712}). As an application of our main result, we shall conclude that Weyl-Heisenberg DPP's \eqref{eq_wh} also enjoy asymptotically normal \emph{smooth} linear statistics. Interestingly, the kernel \eqref{eq_wh} may exhibit no radial symmetries. 

\section{Results}\label{sec_res}
Let $\mathcal{H}_\rho \subset L^2(\CC, dA)$, $\rho>0$, be a family of reproducing Hilbert spaces, where $dA$ is the Lebesgue area measure, and denote by $K_\rho: \CC \times \CC \to \mathbb{C}$ the corresponding reproducing kernels. We will assume that the kernels $K_\rho$ satisfy the following: there exist continuous \emph{envelope functions}
$\phi_\rho:\CC \to [0,\infty)$ such that
\begin{align}\label{eq_env}
|K_{\rho}(z,w)| \leq \phi_{\rho}(z-w), \qquad z,w\in\CC.
\end{align}
The envelops are further assumed to satisfy the following properties:
\begin{enumerate}
	\item[$\bullet$] (\emph{Size})	
	\begin{align}\label{A1}
	\sup_{\rho>0} \sup_{z\in\CC} \tfrac{1}{\rho} \phi_\rho(z) < \infty,
	\end{align}
	\item[$\bullet$] (\emph{Uniform integrability})
	\begin{align}\label{A2}
	\sup_{\rho>0} \int_{\CC} \phi_\rho \,dA <\infty,
	\end{align}
	\item[$\bullet$] (\emph{Interaction decay}) 
	\begin{align}\label{A3}
	\lim_{\rho\to\infty} \rho \int_{\CC} |z|^3 \phi_\rho(z)dA(z) = 0.
	\end{align}
\end{enumerate}
The above assumptions mean that the envelops behave qualitatively like dilations $\phi_\rho(z) = \rho \phi(\sqrt{\rho}z)$.
The model is essentially as in \cite{MR2346510}, but, crucially, does not assume analyticity. On the other hand, here $K_\rho$ is assumed to be reproducing, while 
\cite{MR2346510} requires an asymptotic reproducing property for polynomials.
\begin{rem}\label{rem_ltc}
A positive kernel $K_\rho$ with envelope $\phi_\rho$ satisfying \eqref{A1} is \emph{locally trace class}, that is, for every compact set $B \subset \mathbb{C}$, the localized kernel $K^{B}_\rho(z,w)=1_B(z) K_\rho(z,w) 1_B(w)$ represents a trace-class integral operator. Indeed,
the operator with integral kernel $K^{B}_\rho$ is positive because it is a compression of that with kernel $K_\rho$, while the corresponding trace is $\lesssim \rho |B|$ by \eqref{eq_env} --- if $K$ is continuous this follows from
$\int_\mathbb{C} K^{B}_\rho(z,z) \,dA(z) = \int_{B} K_\rho(z,z) \,dA(z) \lesssim \rho |B|$,
see, e.g., \cite[Theorem 2.1]{simon}, while in general one can use an approximation argument. Hence, 
if $K_\rho$ is also reproducing, by the Soshnikov-Macchi theorem \cite{macchi1975coincidence} \cite[Theorem 3]{soshnikov2000determinantal}, it is the correlation kernel of a unique determinantal point process.
\end{rem}
The following is our main result on asymptotic normality.
\begin{theorem} \label{thm:main}
	Let $\{K_\rho: \rho>0\}$ be a family of Hermitian symmetric reproducing kernels acting on $L^2(\CC, dA)$ 
	with continuous envelopes $\eqref{eq_env}$ satisfying \eqref{A1}, \eqref{A2}, \eqref{A3}. Let $f$ be a compactly supported, real-valued $C^3$ test function. Then the cumulants of the order $k \geq 3$ of the variable $\tr_\rho(f)$ tend to zero as $\rho \rightarrow \infty$. As a consequence: 
	\begin{itemize}
	\item[(i)] If  $\var \big[\tr_\rho(f)\big] \to \sigma^2$
	for some finite $\sigma \geq 0$, then $\tr_\rho(f)-\mathbb{E}[\tr_\rho(f)] \to \mathcal{N}(0,\sigma^2)$ in distribution;
	\item[(ii)] If $\liminf_{\rho\to\infty} \var \big[\tr_\rho(f)\big] > 0$, then $\widetilde{\tr_\rho(}f) \to \mathcal{N}(0,1)$ in distribution.
	\end{itemize}
(In (i), the degenerate case $\sigma=0$ means that the limit variable is almost surely zero.)
	\end{theorem}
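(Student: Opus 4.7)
The plan is to prove that every cumulant $C_k(\tr_\rho(f))$ of order $k \geq 3$ converges to zero as $\rho\to\infty$; once established, (i) and (ii) follow by the classical method of cumulants. In (i), the cumulants of $\tr_\rho(f)-\mathbb{E}[\tr_\rho(f)]$ converge to $(0, \sigma^2, 0, 0, \ldots)$, matching those of $\mathcal{N}(0, \sigma^2)$, while in (ii) the cumulants of the standardized variable $\widetilde{\tr_\rho}(f)$ converge to $(0, 1, 0, 0, \ldots)$ because $C_k/\var[\tr_\rho(f)]^{k/2} \to 0$ for $k \geq 3$ under the lim-inf hypothesis on the variance.

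The starting point is the Fredholm determinant identity $\mathbb{E}\big[e^{t\,\tr_\rho(f)}\big] = \det\big(I + (e^{tf}-1)K_\rho\big)$; expanding the logarithm and the factor $e^{tf}-1$ in power series yields
\begin{equation*}
C_k = \sum_{m=1}^{k} \frac{(-1)^{m+1} k!}{m} \sum_{\substack{k_1+\cdots+k_m=k \\ k_j \geq 1}} \frac{1}{k_1!\cdots k_m!} \int_{\mathbb{C}^m} \prod_{j=1}^m f(z_j)^{k_j} \prod_{j=1}^m K_\rho(z_j, z_{j+1})\,dA(z_1)\cdots dA(z_m),
\end{equation*}
with cyclic indices $z_{m+1} := z_1$. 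After the change of variables $w_j := z_j - z_1$ for $j \geq 2$, the envelope bound together with \eqref{A1}--\eqref{A2} and the convolution estimate $\|\phi_\rho^{*m}\|_\infty \leq \|\phi_\rho\|_\infty \|\phi_\rho\|_1^{m-1} = O(\rho)$ localizes the integrand near $w_j = 0$ and bounds the inner chain integral by $O(\rho)$ in sup norm. I then Taylor-expand each $f(z_1+w_j)^{k_j}$ to second order in $w_j$; the cubic remainder contributes at most $C_m\,\rho\int_\mathbb{C} |w|^3 \phi_\rho(w)\,dA(w) \to 0$ by \eqref{A3}.

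The zeroth-order Taylor term yields $f(z_1)^k$ times the chain integral $\int\prod_j K_\rho(z_1+w_j, z_1+w_{j+1})\,dA(w_2)\cdots dA(w_m)$, which is the $(z_1, z_1)$-entry of the operator $K_\rho^m$. The reproducing property, equivalent to the operator identity $K_\rho^2 = K_\rho$ ($K_\rho$ an orthogonal projection), collapses this to $K_\rho(z_1, z_1)$ independently of $m$. The zeroth-order contribution to $C_k$ thus factors as $\big(\int_\mathbb{C} f(z)^k K_\rho(z,z)\,dA(z)\big)\cdot\Lambda_k$, where $\Lambda_k = k!\,[t^k]\log(e^t) = k!\,[t^k]\,t = 0$ for every $k \geq 2$; hence the zeroth-order term vanishes identically.

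The first- and second-order Taylor contributions bring in weighted chain integrals $\int w_j \prod K_\rho\,dw$ and $\int w_j w_\ell \prod K_\rho\,dw$, paired with values of $\nabla f$ and $\nabla^2 f$ at $z_1$. Raw envelope bounds combined with \eqref{A3} by interpolation yield sizes $o(\rho^{2/3})$ and $o(\rho^{1/3})$ respectively, which is insufficient on its own: one must extract combinatorial cancellation analogous to the zeroth-order identity. In the analytic setup of \cite{ameur2011fluctuations} this is achieved via the Ward identity $\bar\partial K_\rho = 0$ and radial symmetries; the substitute proposed here is the reproducing / pair-correlation identity $\int_\mathbb{C} |K_\rho(z,w)|^2\,dA(w) = K_\rho(z,z)$, combined with the Hermitian symmetry of $K_\rho$ and the cyclic symmetry of the trace, which should allow one to re-express each weighted chain integral so that the telescoping of the zeroth-order step can be repeated. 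The main obstacle is setting up this re-expression uniformly in $m$ and the composition $(k_1,\ldots,k_m)$; once it is in place, the cubic remainder estimate above closes the argument and delivers $C_k \to 0$ for all $k \geq 3$.
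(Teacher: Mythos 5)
Your setup (Soshnikov's cumulant formula, second-order Taylor expansion, control of the cubic remainder via \eqref{A1}--\eqref{A3}, and the combinatorial identity that kills the zeroth-order term) matches the paper's strategy, and the reduction of (i) and (ii) to the vanishing of higher cumulants is fine. But the heart of the proof is missing, and you say so yourself: you do not actually produce the cancellation of the first- and second-order Taylor contributions, you only conjecture that the reproducing identity ``should allow one to re-express each weighted chain integral so that the telescoping of the zeroth-order step can be repeated.'' That re-expression is precisely what the paper has to construct, and it is not a routine extension of the zeroth-order computation. The paper's mechanism has three ingredients you do not have: (a) the reproducing property is used \emph{first} to insert an extra integration variable $z_0$, turning $C_{\rho,k}$ into an integral of $G_k(z_1,\ldots,z_k)$ against the $(k+1)$-point cyclic product $R_{\rho,k+1}(z_0,\ldots,z_k)$, and the Taylor expansion is performed around this \emph{added} point $\vec{z_0}$, not around one of the arguments of $G_k$; (b) after integrating out all but one or two of the $z_r$'s by the reproducing formula, the surviving two- and three-point integrals are \emph{independent of the indices} $r$ (resp.\ $r_1,r_2$), so they factor out and the derivative sums $\sum_r \der_{rj}G_k(\vec{z_0})$, $\sum_r \der_{rm}\der_{rl}G_k(\vec{z_0})$, $\sum_{r_1\neq r_2}\der_{r_1m}\der_{r_2l}G_k(\vec{z_0})$ vanish by the combinatorial identities of Lemma \ref{lem:G_k}; (c) for the mixed second-order terms the common integral is \emph{not} manifestly index-independent in the same way, and the paper must use Hermitian symmetry to identify the two orderings of $R_{\rho,3}$ as complex conjugates and then exploit that the cumulant is real, killing only the real part --- a step with no counterpart in your sketch.

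Moreover, your specific choice of expansion point works against you. Expanding around $z_1$ (an argument of the chain) forces $w_1=0$, so the first-order sum runs only over $r\geq 2$; the generating-function identity that gives you $\Lambda_k=0$ at zeroth order yields cancellation only for the \emph{full} sum over $r=1,\ldots,m$, so the analogous coefficient sums do not vanish in your parametrization. In addition, with no radial symmetry assumed, individual weighted integrals such as $\int_{\CC} (z_r^{(j)}-z_1^{(j)})\,|K_\rho(z_1,z_r)|^2\,dA(z_r)$ need not vanish, so you cannot hope to win term by term; the cancellation must come from the index sums, which is exactly what the extra-variable trick of \eqref{eq:cum_integral_2} is designed to enable. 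Your crude bounds $o(\rho^{2/3})$ and $o(\rho^{1/3})$ for the first- and second-order blocks are, as you note, useless on their own. So as it stands the proposal establishes only the remainder estimate and the zeroth-order vanishing; the decisive algebraic step --- items (a)--(c) above, i.e.\ Lemma \ref{lem:G_k} together with the $z_0$-insertion and the real-part argument --- is absent, and the proof is incomplete. (A minor additional point: the localization in $z_1$ of the remainder term, handled in the paper by Lemma \ref{lem:local}, also needs an argument, since the pointwise bound $\sum_j|w_j|^3$ carries no compact support in $z_1$.)
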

While the setting of Theorem \ref{thm:main} resembles
the planar case of \cite{MR2346510}, our proof builds mainly on \cite{ameur2011fluctuations} and is presented in Section \ref{sec_a}. 

Theorem \ref{thm:main} is applicable for example to general \emph{polyanalytic Ginibre ensembles} \cite{haimi2013polyanalytic, haimi2014bulk}, where $K$ is the reproducing kernel of the space \eqref{eq_m} and $Q: \mathbb{C} \to [0,\infty)$ is an adequate \emph{potential function}. Under mild assumptions on $Q$, a suitable adaptation of
the estimates of \cite{haimi2013polyanalytic, haimi2014bulk}, which concern finite-particle systems, shows that $K$ satisfies the assumptions of Theorem \ref{thm:main}. (While we are not aware of a citable reference for this fact, we expect to provide the particulars in a forthcoming article.)

Concerning translation invariant processes, we obtain the following application of Theorem \ref{thm:main}.
\begin{cor}\label{cor:main}
Let $K: \CC \times \CC \to \CC$ be a Hermitian symmetric reproducing kernel acting on $L^2(\CC, dA)$. Suppose that 
\begin{align}\label{A}
|K(z,w)|=\phi(z-w), \qquad z,w\in\CC,
\end{align}
for a bounded continuous function $\phi:\CC \to [0,\infty)$ such that
\begin{align}\label{B}
\int_\CC (1+|z|^3) \phi(z) \, dA(z) <\infty.
\end{align}
Define the dilated kernels
\begin{align}\label{eq_dilated}
K_\rho(z,w)= \rho K(\sqrt{\rho}z, \sqrt{\rho}w), \qquad \rho>0.
\end{align}

For a compactly supported, real-valued $C^3$ test function $f$ set
\begin{align}
\label{eq_mf}
\mu_f &:= \phi(0) \cdot \int_{\CC} f(z) \,dA(z),
\\
\sigma^2_f &:= \frac{1}{2} \int_\CC \int_\CC (\nabla f(z) \cdot w )^2 \phi(w)^2 \, dA(z) dA(w).
\end{align} 
Then
$\tr_\rho(f)-\rho \cdot \mu_f \to N(0,\sigma_f^2)$ in distribution.
\end{cor}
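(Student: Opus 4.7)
The plan is to deduce the corollary from Theorem \ref{thm:main}(i). The three tasks are verifying the hypotheses of that theorem, computing $\mathbb{E}[\tr_\rho(f)]$ exactly, and showing $\var[\tr_\rho(f)] \to \sigma_f^2$.

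First, I would check that the envelopes $\phi_\rho(z) := \rho\,\phi(\sqrt{\rho}\,z)$ for the dilated kernels $K_\rho$ satisfy \eqref{A1}--\eqref{A3}. This is a direct scaling computation: boundedness of $\phi$ yields \eqref{A1}; the identity $\int_\CC \phi_\rho\,dA = \int_\CC \phi\,dA$ combined with \eqref{B} yields \eqref{A2}; and \eqref{A3} reduces after a change of variables to $\rho^{-1/2}\int_\CC|u|^3\phi(u)\,dA(u)\to 0$, which is again ensured by \eqref{B}. Note that Hermitian symmetry of $K$ combined with \eqref{A} forces $\phi$ to be even, which will be used implicitly in the change of variables below.

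For the expectation, the reproducing property gives $K(w,w) = \|K(\cdot,w)\|_{L^2}^2\ge 0$, and combined with $|K(w,w)| = \phi(0)$ from \eqref{A} this pins down $K(w,w) \equiv \phi(0)$. Hence
$$\mathbb{E}[\tr_\rho(f)] = \rho\,\phi(0)\int_\CC f\,dA = \rho\,\mu_f$$
holds exactly, not merely asymptotically.

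For the variance, I would apply the standard DPP identity together with the reproducing formula \eqref{eq_rf} on $K_\rho(z,z)$ to obtain the symmetric representation
$$\var[\tr_\rho(f)] = \tfrac{1}{2}\int_\CC\int_\CC (f(z)-f(w))^2\,|K_\rho(z,w)|^2\,dA(z)\,dA(w).$$
Substituting $u=\sqrt{\rho}(w-z)$ and Taylor-expanding $f(z+u/\sqrt{\rho}) - f(z) = \tfrac{1}{\sqrt{\rho}}\nabla f(z)\cdot u + O(|u|^2/\rho)$ produces the leading term $\sigma_f^2$. The main technical step is the uniform control of the remainder: splitting the $u$-integral at $|u|=\sqrt{\rho}$, the $C^2$-Taylor bound on the inner region and the crude bound $|f(z+\cdot)-f(z)|\le 2\|f\|_\infty$ on the outer region both produce errors of order $O(\rho^{-1/2})$; both bounds consume only the moment $\int_\CC|u|^3\phi(u)\,dA(u)$ (using $\phi^2 \le \|\phi\|_\infty\phi$ to reduce $\phi^2$-moments to $\phi$-moments), which is exactly what \eqref{B} provides. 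With $\mathbb{E}[\tr_\rho(f)] = \rho\mu_f$ and $\var[\tr_\rho(f)] \to \sigma_f^2$ in hand, Theorem \ref{thm:main}(i) applied to the centered statistic $\tr_\rho(f) - \rho\mu_f$ delivers the stated convergence to $\mathcal{N}(0,\sigma_f^2)$.
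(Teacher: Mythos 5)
Your proposal is correct and follows essentially the same route as the paper: verify that $\phi_\rho(z)=\rho\,\phi(\sqrt{\rho}z)$ satisfies \eqref{A1}--\eqref{A3}, observe $\mathbb{E}[\tr_\rho(f)]=\rho\mu_f$ exactly, use the reproducing property to write $\var[\tr_\rho(f)]=\tfrac12\iint (f(z)-f(w))^2|K_\rho(z,w)|^2\,dA\,dA$, establish $\var[\tr_\rho(f)]\to\sigma_f^2$ via a first-order Taylor expansion with errors controlled by the third-moment condition \eqref{B}, and conclude with Theorem \ref{thm:main}(i). The only cosmetic difference is in bookkeeping of the remainder: you split the difference variable at $|u|=\sqrt{\rho}$, while the paper splits $z$ into a compact neighborhood $B$ of $\supp(f)$ and its complement and uses the global cubic bound $|E(z,w)|\lesssim |z-w|^3$; both give the same $O(\rho^{-1/2})$ error, and your explicit remark that the reproducing property forces $K(w,w)=\phi(0)\ge 0$ is a nice touch the paper leaves implicit.
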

Note that if $\phi(0) \not= 0$ Corollary \ref{cor:main} implies that \eqref{eq:variance_cond} is not satisfied. A proof of Corollary \ref{cor:main} is presented in Section \ref{sec_b}. As an example, the Weyl-Heisenberg kernel \eqref{eq_wh} satisfies the hypotheses for any Schwartz function $g \in \mathcal{S}(\mathbb{R})$, and a suitable envelope is provided by
\begin{align*}
\phi(z) = \Big|
\int_{\mathbb{R}} {g(t)}\overline{g(t-x)} e^{-2\pi i t y} \,dt \Big|, \qquad z=x + i y \in \mathbb{C}.
\end{align*}
Concerning possible generalizations of Corollary \ref{cor:main}, we remark that an analogous setting in higher dimension leads to smooth observables with expectation and variance growing polynomially on $\rho$ (see Remark \ref{rem_dil}). Similarly, as we show in Proposition \ref{prop_non},
if we relax the assumptions of Corollary \ref{cor:main} to allow for non-reproducing kernels, then for each such non-reproducing kernel the expectation and variance of $\tr_\rho(f)$ also grow polynomially in $\rho$. Thus, in these cases, \eqref{eq:variance_cond} is satisfied for some $\delta>0$ and Soshnikov's asymptotic normality theorem can be invoked.

Before proceeding to the proofs, we make some final comments. 
An abstract setting in which linear statistics can be shown to be asymptotically normal was recently introduced in \cite{dinh2021quantitative}, where, moreover, quantitative convergence rates are derived. In the planar case, however, \cite[Model 1.2]{dinh2021quantitative} is not compatible with the envelope assumptions in Theorem \ref{thm:main}, and, as in Soshnikov's result, $\var[\tr_\rho(f)]$ needs to diverge 
as $\rho \to \infty$ \cite[Proposition 4.9]{dinh2021quantitative}.

Second, we remark that Theorem \ref{thm:main} can probably be refined to cover kernels which satisfy the assumptions only near the support of the test function, provided that \eqref{eq_rf} is replaced by a suitably local approximate formula
(while in analytic settings, reproducing formulas are exactly local). Such an extension would allow us to also treat finite particle systems in regimes where points congregate in a compact set (droplet) as long as the test function is supported away from its boundary. However, the expected gain in generality did not seem to merit the inclusion of additional technicalities. A more promising future direction is an extension of Theorem \ref{thm:main} to allow for droplets and test functions that interact with them, paralleling what is known in the model cases \cite{rider2007noise, ameur2015random, haimi2016central}.

The reminder of the article is organized as follows: Section \ref{sec_not} clarifies the notation, Theorem \ref{thm:main} is proved in Section \ref{sec_a}, while Section \ref{sec_b}
contains a proof of Corollary \ref{cor:main} and a discussion on more general settings and the applicability of Soshnikov's theorem.

\section{Notation}\label{sec_not}
The real and imaginary parts of $z \in \mathbb{C}$ are denoted $\mathrm{Re}(z)$ and $\mathrm{Im}(z)$. We shall also identify $\CC\simeq\mathbb{R}^2$
and write $z=(z^{(1)},z^{(2)})$. Derivatives with respect to real variables are denoted as follows: 
for $F:\CC^n \to \mathbb{C}$, we let $\der_{rj} F(z_1, \ldots, z_n) \in \CC$ be the partial derivative of $F$ with respect to $z^{(j)}_r$, $1 \leq r \leq n$, $j=1,2$. For $z_0 \in \CC$, we denote $\vec{z_0} = (z_0,\ldots,z_0) \in \CC^k$ if the vector length $k$ is clear from the context.

The differential of the Lebesgue (area) measure on $\CC$ is denoted $dA$. We also let $dA$ denote the corresponding product measure on $\CC^n$. For example, depending on convenience and readability we write interchangeably
\begin{align*}
\int_{\CC^n} F \,dA &= 
\int_{\CC^n} F(z_1,\ldots,z_n) \,dA(z_1,\ldots,z_n)
=
\int_{\CC^n} F(z_1,\ldots,z_n) \,dA(z_1)\ldots dA(z_n)
\\
&=
\int_{z_1, \ldots, z_n \in \CC} F(z_1,\ldots,z_n) \,dA.
\end{align*}
For two non-negative functions $F,G: X \to [0,\infty)$ we write $F \lesssim G$ if there exists a constant $C>0$ such that $F(x) \leq C G(x)$, for all $x \in X$.
We also use the following asymptotic notation with respect to the parameter $\rho$:
$F(\rho) = O(\rho^\alpha)$ means that
\begin{align}\label{eq_1}
\limsup_{\rho \to \infty} \rho^{-\alpha} F(\rho) <\infty
\end{align}
 while
$F(\rho) = o(\rho^\alpha)$ means that
\begin{align}\label{eq_2}
\lim_{\rho \to \infty} \rho^{-\alpha} F(\rho) =0.
\end{align}
For example, \eqref{A3} means that
$\int_\CC |z|^3 \phi_\rho(z) dA(z) = o(1/\rho)$. When the function $F$ depends on additional parameters besides $\rho$, the limits \eqref{eq_1} and \eqref{eq_2} may or may not hold uniformily on these. Such dependencies are clarified in each case.
\section{Asymptotic normality}\label{sec_a}
The overall strategy is as follows. To show that smooth linear statistics are asymptotically normal we apply the cumulant method (see, e.g., \cite[Appendix A.3]{nourdin2012normal}). We use the exact expressions for cumulants derived in \cite{soshnikov2002gaussian} to then obtain asymptotic expansions by Taylor expanding the test function to second order, as in \cite{rider2007noise,ameur2011fluctuations}. Elaborating on \cite{ameur2011fluctuations}, we show that the reproducing property of the correlation kernel leads to certain significant cancellations, which allow us to conclude that higher order cumulants vanish asymptotically.

\subsection{Cumulants}
Let us fix a family of reproducing kernels $K_\rho$ satisfying (A1), (A2) and (A3) and a compactly supported $C^3$ function $f: \mathbb{C} \to \mathbb{R}$. Without loss of generality, we can assume that the kernel envelops are symmetric
\begin{align*}
\phi_\rho(z)=\phi_\rho(-z), \qquad z\in\CC, \rho>0.
\end{align*}
The cyclic products of the correlation kernel $K_{\rho}$ are denoted
\begin{align*}
R_{\rho, k}(z_1, \ldots, z_k):= K_{\rho}(z_1, z_2) \cdots K_{\rho}(z_{k-1}, z_k) K_{\rho}(z_k, z_1), \qquad
z_1,\ldots,z_k \in \mathbb{C}.
\end{align*}
These are instrumental to compute the cumulants of the linear statistics $\tr_\rho(f)$. Indeed, let us associate with $f$ the expression
\begin{align}\label{eq_defG}
G_k(z_1,\ldots z_k)=\sum_{j=1}^k\frac{(-1)^{j-1}}{j}\sum_{\substack{k_1+k_2+\ldots +k_j=k \\ k_1,\ldots, k_j\geq 1}}\frac{k!}{k_1!k_2!\cdots k_j!}\prod_{l=1}^jf(z_l)^{k_l},\qquad z_1,\ldots,z_k \in \mathbb{C}.
\end{align}
As shown in \cite[Lemma 1]{soshnikov2002gaussian},
the cumulants $C_{\rho,k}(f)$ of $\tr_\rho(f)$ are
\begin{align}\label{eq:cum_integral_1}
C_{\rho,k}(f) = \int_{\mathbb{C}^{k}} G_k(z_1, \ldots, z_k) R_{\rho, k}(z_1, \ldots, z_k) \,dA(z_1) \cdots dA(z_k).
\end{align}
(This is a simplification of a more general formula in \cite[Lemma 1]{soshnikov2002gaussian} that concerns possibly non-reproducing kernels.)
By the reproducing property of the kernel $K_\rho$, we can extend integration in \eqref{eq:cum_integral_1} to a new variable $z_0$ and write
\begin{align} \label{eq:cum_integral_2}
C_{\rho,k}(f)=\int_{\mathbb{C}^{k+1}} G_k(z_1, \ldots, z_k) R_{\rho, k+1}(z_0,z_1, \ldots, z_k) \,dA(z_0) dA(z_1) \cdots dA(z_k).
\end{align}
This observation is a main tool in \cite{ameur2011fluctuations} and it is similarly fundamental for our proof.

\subsection{Taylor expansions}\label{sec_taylor}
As in \cite{rider2007noise,ameur2011fluctuations} we start by expanding $G_k$ up to second order. We write
\begin{equation} \label{eq:taylor}
G_k(z_1, \ldots, z_k)= G_k(\vec{z_0}) + \Ti(z_0, \ldots, z_k)+ \Tii(z_0, \ldots, z_k)+ \rk(z_0, \ldots, z_k)
\end{equation}
where 
\begin{align}
\vec{z_0} &= (z_0, \ldots, z_0),
\\
\label{eq_b1}
\Ti(z_0, \ldots, z_k)&= \sum_{1\leq r \leq k, 1\leq j \leq 2} \der_{rj}G_k(\vec{z_0})\big(z_r^{(j)}- z_0^{(j)}\big),
\\
\label{eq_b2}
\Tii(z_0, \ldots, z_k)&=  \frac12 \sum_{1\leq r_1, r_2 \leq k, 1\leq j_1, j_2 \leq 2} \der_{r_1j_1} \der_{r_2j_2}G_k(\vec{z_0}) \big(z_{r_1}^{(j_1)}-z_0^{(j_1)}\big) \big(z_{r_2}^{(j_2)}- z_0^{(j_2)}\big)
\end{align}
and the reminder term $\rk(z_0, \ldots, z_k)$ satisfies
\begin{align}\label{eq_taylor_error}
|\rk(z_0, \ldots, z_k)| \lesssim 
\sum_{j=1}^k |z_j-z_0|^3,
\end{align}
with an implied constant that depends on $f$ and $k$, but is uniform in the center of the expansion $z_0$ because $f$ (and hence $G_k$) have globally bounded third order derivatives.

We now inspect the effect of the Taylor approximation in the cumulant expression \eqref{eq:cum_integral_2}.

\begin{lemma} \label{lem:local}
	Let $B \subset \mathbb{C}$ be a compact set such that
	$\mathrm{dist} (\supp(f), B^c)>0$. Then, for $k\geq 2$,
	\begin{multline}\label{eq_aaa}
	C_{\rho, k}(f)  
	= \int_{B \times \mathbb{C}^{k}} \bigg( G_k(\vec{z_0})+ \Ti(z_0, \lsdots, z_k) + \Tii(z_0, \lsdots, z_k) \bigg) \,\times 
	\\
	\,R_{\rho, k+1}(z_0, \lsdots, z_k) \,dA(z_0,\lsdots,z_k) + \mathrm{o}(1),
	\end{multline}
	where the convergence in the implied limit depends on $f$, $B$, and $k$.
\end{lemma}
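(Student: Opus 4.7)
\emph{Proof plan.} I will start from the reproducing-kernel form of the cumulant \eqref{eq:cum_integral_2} and split the outer integration over $z_0$ as $\{z_0\in B\}\cup\{z_0\in B^c\}$. On the first region I substitute the Taylor decomposition \eqref{eq:taylor}; the polynomial part reproduces exactly the first integral in \eqref{eq_aaa}, so the task reduces to verifying that both (i) the $\{z_0\in B^c\}$ contribution and (ii) the remainder integral $\int_{B\times\mathbb{C}^k}\rk(z_0,\ldots,z_k)\,R_{\rho,k+1}\,dA$ are $o(1)$.

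For (i), I exploit that every summand in \eqref{eq_defG} contains a factor $f(z_1)^{k_1}$ with $k_1\geq 1$, so $G_k$ vanishes outside $\{z_1\in\supp(f)\}$ and satisfies a uniform $L^\infty$-bound depending only on $f$ and $k$. Since $\supp(f)$ lies at distance $\delta>0$ from $B^c$, the envelope factor $\phi_\rho(z_0-z_1)$ is forced onto $\{|z_0-z_1|\geq\delta\}$ on this region. Bounding $|R_{\rho,k+1}|$ by the cyclic product $\phi_\rho(z_0-z_1)\phi_\rho(z_1-z_2)\cdots\phi_\rho(z_{k-1}-z_k)\phi_\rho(z_k-z_0)$ and performing the telescoping substitution $u_l=z_l-z_{l-1}$ (Jacobian one), the last envelope becomes the closing factor $\phi_\rho(u_1+\cdots+u_k)$; I dominate it by $\|\phi_\rho\|_\infty=O(\rho)$ via \eqref{A1}, integrate $z_0$ over a translate of $\supp(f)$ (finite area), and absorb the variables $u_2,\ldots,u_k$ using \eqref{A2}. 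The decisive $u_1$-integral is controlled by
\begin{equation*}
\int_{|u_1|\geq\delta}\phi_\rho(u_1)\,dA(u_1)\;\leq\;\delta^{-3}\int_{\mathbb{C}}|u|^3\phi_\rho(u)\,dA(u)\;=\;o(1/\rho)
\end{equation*}
via \eqref{A3}, giving an overall bound of $O(\rho)\cdot o(1/\rho)=o(1)$.

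For (ii), I use the pointwise estimate \eqref{eq_taylor_error} to reduce matters to $\sum_{j=1}^k\int_{B\times\mathbb{C}^k}|z_j-z_0|^3|R_{\rho,k+1}|\,dA$. After the same substitution, $|z_j-z_0|\leq|u_1|+\cdots+|u_j|$ and elementary inequalities split each summand into the product of $|B|$, the closing factor bounded by $\|\phi_\rho\|_\infty=O(\rho)$, $(k-1)$ envelope integrals of size $O(1)$ by \eqref{A2}, and one factor $\int_{\mathbb{C}}|u|^3\phi_\rho(u)\,dA(u)=o(1/\rho)$ by \eqref{A3}. The product is again $o(1)$, and summing over $j$ finishes the argument.

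The main obstacle is the cyclic structure of $R_{\rho,k+1}$: the telescoping substitution $u_l=z_l-z_{l-1}$ leaves one envelope evaluated at the ``closing'' sum $u_1+\cdots+u_k$, which is \emph{not} an independent variable, so a naive factorized estimate fails. The resolution is to trade that closing factor for $\|\phi_\rho\|_\infty\lesssim\rho$ from \eqref{A1}, exactly compensating the $o(1/\rho)$ gain from the $|u|^3$-weighted tail control in \eqref{A3}. This is the precise point where all three envelope assumptions \eqref{A1}--\eqref{A3} must cooperate, and the same mechanism is reused in both (i) and (ii).
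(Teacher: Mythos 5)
Your argument is correct and follows essentially the same route as the paper's proof: starting from \eqref{eq:cum_integral_2}, you restrict $z_0$ to $B$ at the cost of an $o(1)$ error controlled by the separation of $\supp(f)$ from $B^c$ together with \eqref{A1}--\eqref{A3} (using that every term of $G_k$ carries a factor $f(z_1)^{k_1}$), and then Taylor-expand and absorb the remainder via \eqref{eq_taylor_error} and the same three envelope assumptions. The only deviations are cosmetic: you sup-bound the closing envelope factor after the change of variables $u_l=z_l-z_{l-1}$ and control the tail by a Chebyshev-type bound, whereas the paper sup-bounds a single kernel factor, inserts $\varepsilon^{-3}|z_1-z_0|^3$, and integrates the chain iteratively.
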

\begin{proof}
\noindent {\bf Step 1}. Let $\varepsilon := \mathrm{dist} (\supp(f), B^c)>0$, and fix $k \geq 2$. We first show that
\begin{align}\label{eq_a}
C_{\rho, k}(f) = \int_{B \times \mathbb{C}^k} G_k(z_1, \lsdots, z_k) R_{\rho, k+1}(z_0, \lsdots, z_k) \,dA(z_0, \lsdots, z_k) + \mathrm{o}(1). 
\end{align}
Inspecting \eqref{eq_defG} we see that $G_k$ is a sum of terms of the form $f_1(z_1) \cdots f_k(z_k)$, where each $f_j$ is a power of $f$, and thus compactly supported or is identically $1$, 
and not all $f_j$ are $1$. Hence, it suffices to show that for each $k \in \mathbb{N}$,
\begin{align} \label{eq:local1}
\begin{aligned}
	&\int_{\mathbb{C}^{k+1}} f_1(z_1) \cdots f_k(z_k) R_{\rho, k+1}(z_0, \lsdots, z_k) \,dA(z_0,\lsdots, z_k) \\
	&\quad= \int_{B \times \mathbb{C}^k } f_1(z_1) \cdots f_k(z_k) R_{\rho, k+1}(z_0, \lsdots, z_k) \,dA(z_0,\lsdots, z_k) + \mathrm{o}(1).
\end{aligned}
\end{align}
Moreover, we can assume that $f_1$ is compactly supported; otherwise, it would be identically $1$ and we would eliminate the variable $z_1$ using the reproducing property of the kernel $K_\rho$, and relabel the rest of the points to obtain another instance of \eqref{eq:local1} with a smaller value of $k$. (Recall that not all $f_j$ are $1$.)

Let us now estimate the error term corresponding to \eqref{eq:local1}.
We exploit the support of $f_1$ and bound the cyclic product $R_{\rho, k+1}(z_0,z_1, \ldots, z_k)$ applying \eqref{A1} to the factor $K_\rho(z_1,z_2)$ and \eqref{A2} to the others:
\begin{align*}
	&E_k:=\bigg| \int_{B^c \times \CC^k} f_1(z_1) \csdots f_k(z_k) R_{\rho, k+1}(z_0, \lsdots, z_k) \,dA(z_0, \lsdots,z_k) \bigg|
	\\
	&\,\lesssim \rho  \int_{z_0 \in B^c} \int_{z_1 \in B,\,|z_1-z_0|\geq \varepsilon} \int_{z_2, \ldots, z_k \in \CC} \phi_\rho(z_1-z_0) \phi_\rho(z_3-z_2) \csdots \phi_\rho(z_k-z_{k-1}) \phi_\rho(z_0-z_k) \, dA
 	\\
	&\,\lesssim \rho \int_{z_0 \in \CC} \int_{z_1 \in B} \int_{z_2, \ldots, z_k \in \CC} |z_1-z_0|^3 \phi_\rho(z_1-z_0) \phi_\rho(z_3-z_2) \csdots \phi_\rho(z_k-z_{k-1}) \phi_\rho(z_0-z_k) \,dA
	\\
	&\,=\rho \int_{z_1 \in B} \int_{z_0 \in \CC} |z_1-z_0|^3 \phi_\rho(z_1-z_0) \int_{z_2, \lssdots, z_k \in \CC} \phi_\rho(z_3-z_2) \csdots \phi_\rho(z_k-z_{k-1}) \phi_\rho(z_0-z_k) \,dA\,dA(z_0,z_1)
\end{align*}
The integral on $z_2,\ldots,z_k$ is seen to be $O(1)$ by integrating in the order $z_2, \ldots, z_k$ and applying \eqref{A2}. The remaining expression can be bounded by means of \eqref{A3}:
\begin{align*}
	E_k \lesssim \rho \int_{z_1 \in B} \int_{z_0 \in \CC} |z_1-z_0|^3 \phi_\rho(z_1-z_0) \,dA(z_0)\,dA(z_1) \lesssim \rho \cdot \mathrm{o}(1/\rho) = \mathrm{o}(1).
\end{align*}

\medskip

\noindent {\bf Step 2}. By the Taylor expansion \eqref{eq:taylor}
and \eqref{eq_taylor_error}, it suffices to prove that
for all $j=1,\ldots,k$,
$$
\int_{B \times \CC^k} |z_j-z_0|^3 R_{\rho, k+1}(z_0, \lsdots, z_k) \,dA(z_0,\lsdots,z_k) = \mathrm{o}(1).
$$
We use the bound $|z_j-z_0|^3 \lesssim \sum_{n=1}^j |z_{n}-z_{n-1}|^3$. We suppose first that $j<k$ and estimate $|K_\rho(z_j,z_{j+1})| \lesssim \rho$ by \eqref{A1}, and the other factors of $R_{\rho, k+1}(z_0, \ldots, z_k)$ by their envelopes to obtain
\begin{align*}
E'_{j} &:= \bigg|\int_{B \times \CC^k} |z_j-z_{j-1}|^3 R_{\rho, k+1}(z_0, \lsdots, z_k) \,dA(z_0, \lsdots,z_k)\bigg|
\\
&\lesssim \rho
\int_{z_0 \in B} \int_{z_j \in \mathbb{C}} |z_j-z_{j-1}|^3 \phi_\rho(z_j-z_{j-1}) 
\int \phi_\rho(z_0-z_k)
\prod_{\stackrel{n=1,\ldots,k}{n\not=j,j+1}} \phi_\rho(z_n-z_{n-1}) \,dA \, dA(z_j) \,dA(z_0),
\end{align*}
where the innermost integral runs over all $z_n \in \mathbb{C}$ with $1 \leq n \leq k$ and $n\not=j$. 
By \eqref{A2}, if we integrate in the order $z_{j+1},\ldots,z_k$ we obtain
\begin{align*}
\int_{z_k} \cdots \int_{z_{j+1}} \phi_\rho(z_0-z_k) \prod_{\stackrel{n=1,\ldots,k}{n\not=j,j+1}} \phi_\rho(z_n-z_{n-1}) \,dA
\lesssim \prod_{n=1}^{j-1} \phi_\rho(z_n-z_{n-1}).
\end{align*}
Second, integration on $z_j$ brings the factor
\begin{align*}
\rho \int_{z_j \in \mathbb{C}} |z_j-z_{j-1}|^3 \phi_\rho(z_j-z_{j-1}) 	\, dA(z_j)
\lesssim \rho \cdot \mathrm{o}(1/\rho) = \mathrm{o}(1). 
\end{align*}
Finally, by \eqref{A2}, integration in the order $z_{j-1},\ldots,z_{1}$ yields
\begin{align*}
E'_{j} \lesssim \mathrm{o}(1) \cdot \int_{z_0\in B}
\int_{z_1} \cdots \int_{z_{j-1}}  \prod_{n=1}^{j-1} \phi_\rho(z_n-z_{n-1}) \, dA
\lesssim \mathrm{o}(1) \cdot |B| = \mathrm{o}(1).
\end{align*}
The case $j=k$ is similar: we bound $|K_\rho(z_{0},z_{k})| \lesssim \rho$ and proceed as before by estimating integrals in the order $z_k, \ldots, z_0$.
\end{proof}
\subsection{Symmetry properties of cumulants}
We now collect various symmetry properties of the functions $G_k$ in the following lemma, which elaborates on \cite[Lemmas 3.1, 3.2, 3.3]{ameur2011fluctuations} by recording certain additional symmetries implicit in the proof of those results.

\begin{lemma} \label{lem:G_k}
	The following holds for all $z_0 \in \CC$ and
	$k \geq 3$:
	\begin{enumerate} [(i)]
		\item[(i)] $G_k(\vec{z_0})=0$, 
		\item[(ii)] $\sum_{1\leq r \leq k} (\der_{rj}G_k)(\vec{z_0}) =0$ for all $1 \leq j \leq 2$,
		\item[(iii)]  $\sum_{1 \leq r_1, r_2 \leq k}(\der_{r_1m} \der_{r_2l}G_k)(\vec{z_0}) =0$ for all $1 \leq m, l \leq 2$,
		\item[(iv)] $\sum_{1 \leq r \leq k} (\der_{rm}\der_{rl}G_k)(\vec{z_0})=0$ for all $1 \leq l,m \leq 2$,
		\item[(v)] $\sum_{1 \leq r_1 \neq r_2 \leq k}(\der_{r_1m} \der_{r_2l}G_k)(\vec{z_0}) =0$, for all $1 \leq m, l \leq 2$.
	\end{enumerate}
\end{lemma}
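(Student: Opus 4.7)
\noindent The plan is to view $G_k$ as a polynomial in the values $u_\ell := f(z_\ell)$, and to reduce every identity to a combinatorial statement about this polynomial at the diagonal $u_\ell \equiv u := f(z_0)$. Let $P_k(u_1,\ldots,u_k)$ denote the polynomial obtained by substituting $u_\ell$ for $f(z_\ell)$ in \eqref{eq_defG}. At $\vec{z_0}$, the partials $\partial_j f(z_r)$ and $\partial_m \partial_l f(z_r)$ are independent of $r$, so the chain rule yields
$$\sum_r \partial_{rj} G_k(\vec{z_0}) = \partial_j f(z_0)\,\sum_r (\partial_{u_r} P_k)(\vec u), \qquad \vec u := (u,\ldots,u),$$
and
$$\sum_{r_1, r_2}\partial_{r_1 m}\partial_{r_2 l} G_k(\vec{z_0}) = \partial_m\partial_l f(z_0) \sum_r (\partial_{u_r} P_k)(\vec u) + \partial_m f(z_0)\partial_l f(z_0) \sum_{r_1, r_2}(\partial_{u_{r_1}}\partial_{u_{r_2}} P_k)(\vec u),$$
with the analogous decomposition for $\sum_{r_1 = r_2}$ (carrying the $\partial_m\partial_l f$ term) and for $\sum_{r_1 \neq r_2}$ (not). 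Once the identity $\sum_r (\partial_{u_r} P_k)(\vec u) = 0$ (for $k\geq 2$) is proved, every assertion of the lemma reduces to a corresponding sum identity for $P_k$ alone.

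The main tool is the generating function
$$F(t; u_1, u_2, \ldots) := \sum_{j \geq 1} \frac{(-1)^{j-1}}{j}\prod_{i=1}^j (e^{u_i t} - 1) = \sum_{k \geq 1} \frac{P_k(u_1, u_2, \ldots)}{k!}\, t^k,$$
obtained by expanding each factor $e^{u_i t} - 1$ and collecting powers of $t$. On the diagonal the product telescopes to $F(t; u, u, \ldots) = \log(e^{ut}) = u t$, so $P_k(\vec u) = 0$ for $k \geq 2$, which is (i). Differentiating the identity $F(t; u+s, u+s, \ldots) = (u + s)t$ in $s$ once and twice reads off $\sum_r (\partial_{u_r} F)\big|_{\vec u} = t$ and $\sum_{r_1, r_2}(\partial_{u_{r_1}}\partial_{u_{r_2}} F)\big|_{\vec u} = 0$; extracting $[t^k]$ for $k \geq 3$ then gives (ii) and (iii).

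For (iv), I compute the pure second derivative directly from the product structure of $F$: only terms with $j \geq r$ contribute to $\partial_{u_r}^2 F$, and differentiating the single factor $e^{u_r t} - 1$ twice produces $t^2(1+A)$ with $A := e^{u t} - 1$. Using $\sum_{r \geq 1}\sum_{j \geq r} c_j = \sum_j j\, c_j$, one obtains
$$\sum_r (\partial_{u_r}^2 F)\big|_{\vec u} = t^2 (1+A) \sum_{j \geq 1} \frac{(-1)^{j-1}}{j} A^{j-1} \cdot j = t^2(1+A)\cdot\frac{1}{1+A} = t^2,$$
so $\sum_r (\partial_{u_r}^2 P_k)(\vec u) = 0$ for $k \geq 3$, yielding (iv). Identity (v) then follows from (iii) and (iv) via $\sum_{r_1 \neq r_2} = \sum_{r_1, r_2} - \sum_{r_1 = r_2}$.

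The only delicate point I anticipate is the chain-rule bookkeeping in the second-derivative formula: the $\partial_m\partial_l f$ correction from the diagonal $r_1 = r_2$ terms must be cleared by the first-order identity (ii) before the pure combinatorial statements (iv) and (v) come into play. Once the generating function is in hand the combinatorial content is essentially that $P_k$ is the $k$-th cumulant polynomial in the pseudo-moments $m_\ell = u_\ell^\ell$, whose diagonal behaviour is controlled by the trivial moment generating function $e^{ut}$ of the constant random variable $X \equiv u$.
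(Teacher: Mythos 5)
Your argument is correct: the reduction of all five identities to diagonal identities for the polynomial $P_k(u_1,\dots,u_k)$ via the chain rule is sound (your $\sum_r(\partial_{u_r}P_k)(\vec u)$ and $\sum_r(\partial^2_{u_r}P_k)(\vec u)$ are, up to powers of $u=f(z_0)$, exactly the sums $S_k'$ and $S_k$ appearing in the paper's computation), the generating-function identities $F(t;u,u,\dots)=\log(1+(e^{ut}-1))=ut$ and $\sum_r(\partial^2_{u_r}F)\big|_{\mathrm{diag}}=t^2(1+A)\sum_{j\ge1}(-1)^{j-1}A^{j-1}=t^2$ are legitimate formal power series manipulations (each $t^k$-coefficient involves only finitely many $j$ and $r$, and $A=e^{ut}-1$ has no constant term), and deducing (v) as (iii) minus (iv) is exactly what the paper does. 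The difference is in how the combinatorial core is handled: the paper does not reprove (i) or the vanishing of $S_k$ and $S_k'$, but quotes them from Ameur--Hedenmalm--Makarov (Lemmas 3.1 and 3.3 and Eqs.\ (3.1), (3.5), (3.9) there), its own contribution being only the observation that the two sums vanish separately so that the finer identity (iv) holds; you instead prove all the needed sum identities self-containedly by differentiating the logarithmic generating function on the diagonal, which packages (i)--(iv) into three one-line coefficient extractions ($[t^k]$ of $ut$, $t$, and $t^2$ for $k\ge3$) and makes the proof independent of the cited source, at the modest cost of introducing the formal-series framework and the chain-rule bookkeeping you rightly flag (the $\partial_m\partial_l f$ terms must be removed using the first-order identity before (iv) and (v) become purely combinatorial). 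The only inaccuracy is your closing aside describing $P_k$ as the cumulant polynomial in ``pseudo-moments $m_\ell=u_\ell^{\ell}$'', which is garbled (the correct statement is that $F$ is the $\log$-moment expansion with the $i$-th factor $e^{u_it}-1$ decoupled), but nothing in the proof depends on it.
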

\begin{proof}
The proof of (i) is exactly as in \cite[Lemma 3.1]{ameur2011fluctuations}. Parts (ii) and (iii) follow by taking derivatives on (i). We now turn to (iv), that is a refined version of \cite[Lemma 3.3]{ameur2011fluctuations} which shows that $\Delta_{\mathbb{R}^{2k}} G_k(\vec{z_0})=0$.
	
	Let $k_1, \ldots, k_j$ be positive integers such that $k_1 + \cdots+ k_j=k$. Then for $1 \leq r \leq j$, $1 \leq m,l \leq 2$ and  $z_1, \ldots, z_k \in \CC$ we have
	\begin{align}
	\begin{aligned}
	&\der_{rm}\der_{rl} \bigg( \prod_{n=1}^j f(z_n)^{k_n} \bigg) \\
	&\quad= \prod_{1 \leq n \leq j, n \neq r} f(z_n)^{k_n} \bigg( k_r(k_r-1) f(z_r)^{k_r-2} \der_{m}f(z_r) \der_{l} f(z_r)+ k_r f(z_r)^{k_r-1} \der_{m} \der_{l}f(z_r) \bigg).
	\end{aligned}
	\end{align}
	This gives 
	\begin{align*}
	\sum_{1 \leq r \leq k} (\der_{rm}\der_{rl}G_k)(\vec{z_0}) = f(z_0)^{k-2} \der_mf(z_0) \der_lf(z_0) \cdot S_k + f(z_0)^{k-1} \der_m\der_lf(z_0) \cdot S'_k,
	\end{align*}
	where
	\begin{align*}
	S_k &= 	\sum_{j=1}^k \frac{(-1)^{j-1}}{j} \sum_{\substack{k_1+k_2+\ldots +k_j=k \\ k_1,\ldots, k_j\geq 1}} \frac{ k!(k_1(k_1-1)+ \cdots+ k_j(k_j-1))}{k_1! \cdots k_j!},
	\\
	S'_k &= \sum_{j=1}^k \frac{(-1)^{j-1}}{j}
	\sum_{\substack{k_1+k_2+\ldots +k_j=k \\ k_1,\ldots, k_j\geq 1}} \frac{k \cdot k!}{k_1! \cdots k_j!}.
	\end{align*}
	$S_k$ is shown to be zero in the proof of \cite[Lemma 3.3]{ameur2011fluctuations} (see Eqs. (3.5) and (3.9)
	in \cite{ameur2011fluctuations}), while $S'_k$ shown to be zero in \cite[Lemma 3.1 and Eq. (3.1)]{ameur2011fluctuations}. 
	
	Finally, (v) follows by combining (iii) and (iv).	
\end{proof}

\subsection{Proof of Theorem \ref{thm:main}}
\mbox{}

\noindent {\bf Step 1}. We invoke Lemma \ref{lem:local} with an adequate choice of $B$ and use the representation \eqref{eq_aaa}. First, note that, by Lemma \ref{lem:G_k}, 
\begin{align}\label{eq_c0}
G_k(\vec{z_0})=0.
\end{align}
Let us show that
\begin{align}\label{eq_c}
\int_{B\times\mathbb{C}^k} \Ti(z_0,\lsdots,z_k) R_{\rho,k+1}(z_0,\lsdots,z_k) \,dA(z_0,\lsdots,z_k) = 0.
\end{align}
We inspect \eqref{eq_b1}, fix $j \in \{1,2\}$ and use the reproducing property of the kernel $K_\rho$ to compute
\begin{align*}
	&\sum_{r=1}^k \int_{B \times \mathbb{C}^{k}} \der_{rj}G_k(\vec{z_0}) \big(z_r^{(j)}-z_0^{(j)}\big) R_{\rho,k+1}(z_0, \ldots, z_k) \,dA(z_0)\cdots dA(z_k)
	\\
	&\quad= \sum_{r=1}^k \int_{B} \int_{\mathbb{C}} \der_{rj}G_k(\vec{z_0}) \big(z_r^{(j)}-z_0^{(j)}\big) R_{\rho,2}(z_0, z_r) \,dA(z_r) dA(z_0)
	\\
	&\quad=  \int_{B \times \mathbb{C}}  \sum_{r=1}^k  \der_{rj}G_k(\vec{z_0})  \big(z^{(j)}-z_0^{(j)}\big) R_{\rho,2}(z_0, z) \,dA(z) dA(z_0).
\end{align*}
The sum on $r$ vanishes by Lemma \ref{lem:G_k}, which proves \eqref{eq_c}.

\medskip
\noindent {\bf Step 2}. We turn to the second order terms $\Tii$ given by \eqref{eq_b2}. We separate the terms for which $r_1=r_2$
and classify the rest according to $(j_1,j_2)$:
\begin{align}\label{eq_f}
\int_{B \times \mathbb{C}^{k}} \Tii(z_0, \lsdots, z_k) R_{\rho,k+1}(z_0, \lsdots, z_k) \, dA(z_0,\lsdots, z_k)=
\frac12\big[
A_0+A_{1,1} +A_{1,2}+A_{2,1}+A_{2,2}\big],
\end{align}
where
\begin{align*}
A_0 &= \int_{B \times \mathbb{C}^{k}} \sum_{\substack{1\leq r \leq k\\1\leq j_1, j_2 \leq 2}} \der_{r j_1} \der_{r j_2}G_k(\vec{z_0}) \big(z_{r}^{(j_1)}-z_0^{(j_1)}\big)\big(z_{r}^{(j_2)}-z_0^{(j_2)}\big) R_{\rho, k+1}(z_0, \lsdots, z_k) \,dA(z_0,\lsdots,z_k),
\\
A_{j_1,j_2} &= \int_{B \times \mathbb{C}^{k}} \sum_{\substack{1\leq r_1, r_2 \leq k\\r_1 \neq r_2}} \der_{r_1j_1} \der_{r_2j_2}G_k(\vec{z_0}) \big(z_{r_1}^{(j_1)}-z_0^{(j_1)}\big)\big(z_{r_2}^{(j_2)}-z_0^{(j_2)}\big) R_{\rho, k+1}(z_0, \lsdots, z_k) \,dA(z_0,\lsdots,z_k).
\end{align*}

\medskip

\noindent {\bf Step 3}. We use the reproducing formula and Lemma \ref{lem:G_k} to compute
\begin{align*}
	A_0	&= \sum_{\substack{1\leq r \leq k\\1\leq j_1, j_2 \leq 2}} 	\int_{B} \int_{\mathbb{C}} 
	\der_{rj_1} \der_{rj_2}G_k(\vec{z_0}) 
\big(z_r^{(j_1)}-z_0^{(j_1)}\big) \big(z_r^{(j_2)}-z_0^{(j_2)}\big)  R_{\rho, 2}(z_0, z_r)\, dA(z_r) dA(z_0)
	\\
	&= \sum_{1\leq j_1, j_2 \leq 2}
	 \int_{B} \int_{ \mathbb{C}} 
	\sum_{{1\leq r \leq k}}
			\der_{rj_1} \der_{rj_2}G_k(\vec{z_0}) 
	 \big(z^{(j_1)}-z_0^{(j_1)}\big) \big(z^{(j_2)}-z_0^{(j_2)}\big)  R_{\rho, 2}(z, z_0)\,dA(z)dA(z_0)
	=0.
\end{align*}

\medskip

\noindent {\bf Step 4}. Similarly, for $j\in\{1,2\}$,
\begin{align}\label{eq_d1}
A_{jj}&=\sum_{\substack{1\leq r_1, r_2 \leq k\\r_1 < r_2}} \int_{B}  \der_{r_1 j} \der_{r_2j}G_k(\vec{z_0}) \int_{\mathbb{C}^2}
\big(z_{r_1}^{(j)}-z_0^{(j)}\big)\big(z_{r_2}^{(j)}-z_0^{(j)}\big)
R_{\rho, 3}(z_0, z_{r_1}, z_{r_2}) \,dA(z_{r_1}, z_{r_2})dA(z_0)\,+
\\\label{eq_d2}
&\,
\sum_{\substack{1\leq r_1, r_2 \leq k\\r_1 > r_2}}  \int_{B}  \der_{r_1 j} \der_{r_2j}G_k(\vec{z_0}) 
\int_{\mathbb{C}^2} \big(z_{r_1}^{(j)}-z_0^{(j)}\big)\big(z_{r_2}^{(j)}-z_0^{(j)}\big)
R_{\rho, 3}(z_0, z_{r_2}, z_{r_1}) \,dA(z_{r_1},z_{r_2})dA(z_0).
\end{align}
The inner integrals in \eqref{eq_d1} and \eqref{eq_d2} are equal and independent of $r_1$ and $r_2$. Thus, denoting by $I$ their common value, Lemma \ref{lem:G_k} gives
\begin{align*}
A_{jj}= I \cdot \sum_{\substack{1\leq r_1, r_2 \leq k\\r_1 \neq r_2}} \der_{r_1 j} \der_{r_2j}G_k(\vec{z_0}) = 0.
\end{align*}

\medskip

\noindent {\bf Step 5}. For the cross terms $A_{21}=A_{12}$ we invoke again the reproducing formula,
\begin{align}\label{eq_e1}
A_{12} &= \sum_{\substack{1\leq r_1, r_2 \leq k\\r_1 < r_2}}  \int_{B} \der_{r_1 1} \der_{r_2 2}G_k(\vec{z_0})  \int_{\mathbb{C}^{2}} 
\big(z_{r_1}^{(1)}-z_0^{(1)}\big)\big(z_{r_2}^{(2)}-z_0^{(2)}\big)
R_{\rho, 3}(z_0, z_{r_1}, z_{r_2}) \,dA(z_{r_1},z_{r_2})dA(z_0) \,+
\\
\label{eq_e2}
&\,\sum_{\substack{1\leq r_1, r_2 \leq k\\r_1 > r_2}} \int_{B}
 \der_{r_1 1}  \der_{r_2 2}G_k(\vec{z_0})  \int_{\mathbb{C}^{2}} 
\big(z_{r_1}^{(1)}-z_0^{(1)}\big)\big(z_{r_2}^{(2)}-z_0^{(2)}\big)
R_{\rho, 3}(z_0, z_{r_2}, z_{r_1}) \,dA(z_{r_1},z_{r_2})dA(z_0),
\end{align}
and note that, by the Hermitian symmetry of the kernel $K_\rho$, the inner integral in \eqref{eq_e1} --- denoted by $I'$ --- is the complex conjugate of the inner integral in \eqref{eq_e2}. Thus,
\begin{align}\label{eq_nn}
\mathrm{Re}(A_{21})=\mathrm{Re}(A_{12})=
\mathrm{Re}(I') \cdot \sum_{\substack{1\leq r_1, r_2 \leq k\\r_1 \neq r_2}} \der_{r_1 1} \der_{r_2 2}G_k(\vec{z_0}) = 0.
\end{align}

We combine the previous calculations with \eqref{eq_f} to conclude
\begin{align}\label{eq_c2}
\mathrm{Re} \Big[ \int_{B\times\mathbb{C}^k} \Tii(z_0,\ldots,z_k) R_{\rho,k+1}(z_0,\ldots,z_k) \,dA(z_0)\ldots dA(z_k) \Big] = 0.
\end{align}

\noindent {\bf Step 6}. Since $f$ is real-valued, so is the cumulant \eqref{eq:cum_integral_2}. Hence, \eqref{eq_aaa} together with \eqref{eq_c0}, \eqref{eq_c}, \eqref{eq_nn}, and \eqref{eq_c2} yield
\begin{align*}
C_{\rho,k} = \mathrm{Re} (C_{\rho,k}) = o(1),
\end{align*}
as claimed.

Towards proving (i), suppose that $\mathrm{Var}[\tr_\rho(f)] \longrightarrow \sigma^2>0$. To conclude the desired asymptotic normality, the cumulant method requires the uniform integrability of the random variables $\tr_\rho(f)- \mathbb{E}[\tr_\rho(f)]$
(see, e.g., \cite[Appendix A.3]{nourdin2012normal}). This is the case because their variances are uniformly bounded. In the degenerate case $\sigma=0$, $\tr_\rho(f)- \mathbb{E}[\tr_\rho(f)]\to 0$ in probability, and thus in distribution.

Similarly, suppose that
$\liminf_{\rho\to\infty} \var [\tr_\rho(f)] > 0$. It is easy to see that, due to the enveloping conditions,  $\var [\tr_\rho(f)]=O(1)$; see, for example, the proof of Corollary \ref{cor:main} below. Let $\tilde{f}_\rho := \var [\tr_\rho(f)]^{-1} f$. Then, inspecting \eqref{eq_defG}, we see that the cumulants $\widetilde{C}_{\rho,k}$ of $\tr_\rho(\tilde{f}_\rho)$ satisfy, for $k \geq 3$,
\begin{align*}
\widetilde{C}_{\rho,k} = \var [\tr_\rho(f)]^{-k} \cdot  C_{\rho,k}(f) = o(1),
\end{align*}
and we conclude by the cumulant method that $\tr_\rho(\tilde{f}_\rho)$ is asymptotically normal. This proves (ii).
\qed

\section{The translation invariant case}\label{sec_b}
We now specialize Theorem \ref{thm:main} to dilated kernels with translation invariant correlations.
\begin{proof}[Proof of Corollary \ref{cor:main}] 
Let $K$ satisfy the assumptions of Corollary \ref{cor:main}. Without loss of generality, we assume that $\phi$ is symmetric. The expectation of $\tr_\rho(f)$ is given by \eqref{eq_e} and reduces to
$\rho \mu_f$, cf. \eqref{eq_mf}.
Since $K$ is reproducing,
\begin{align*}
\var[\tr_\rho(f)] = \frac12 \int_\CC \int_\CC \big[f(z)-f(w)\big]^2 |K_\rho(z,w)|^2 \, dA(w) dA(z),
\end{align*}
as can be easily checked by inspecting \eqref{eq:cum_integral_1} with $k=2$.
Let $B \subset \CC$ be a compact set such that $\varepsilon:=\mathrm{dist} (\supp(f), B^c)>0$. We split the variance as
\begin{align}\label{eq_a1}
\begin{aligned}
\var[\tr_\rho(f)] &= \frac12 \int_B \int_\CC \big[f(z)-f(w)\big]^2 |K_\rho(z,w)|^2 \, dA(w) dA(z)
\\
&\qquad+ \frac12 \int_{B^c} \int_\CC f(w)^2 |K_\rho(z,w)|^2 \, dA(w) dA(z).
\end{aligned}
\end{align}
For the second term, note that when $w \in \supp(f)$ and $z \in B^c$,
$|z-w| \geq \varepsilon$. Thus, using \eqref{B}, and $\phi^2 \leq \|\phi\|_\infty \cdot \phi$,
\begin{align}\label{eq_a2}
\begin{aligned}
&\int_{B^c} \int_\CC f(w)^2 |K_\rho(z,w)|^2 \, dA(w) dA(z)
\\
&\qquad\leq  \varepsilon^{-3} \| f \|^2_\infty
\int_{B^c} \int_{\supp(f)} |z-w|^3 |K_\rho(z,w)|^2 \, dA(w) dA(z)
\\
&\qquad \lesssim \rho^2 \int_{B^c} \int_{\supp(f)} |z-w|^3 \phi(\sqrt{\rho}(z-w)) \, dA(w) dA(z)
\\
&\qquad =  \rho^{-1/2} \int_{\supp(f)} \rho \int_{B^c} |\sqrt{\rho}(z-w)|^3 \phi(\sqrt{\rho}(z-w)) \, dA(z) dA(w)
\lesssim \rho^{-1/2}.
\end{aligned}
\end{align}
Taylor expanding $f$ to order 1 we see that
\begin{align*}
[f(w)-f(z)]^2=[\nabla f(z) \cdot (w-z)]^2 + E(z,w),
\end{align*}
where $|E(z,w)| \leq C |z-w|^3$ since $f \in C^3$ and compactly supported. Thus
\begin{align}\label{eq_a3}
\begin{aligned}
&\int_B \int_\CC |E(z,w)| |K_\rho(z,w)|^2 \, dA(w) dA(z) \\
&\qquad\lesssim
\rho^2 \int_B \int_\CC |z-w|^3 \phi(\sqrt{\rho}(z-w)) \, dA(w) dA(z) = O(\rho^{-1/2}).
\end{aligned}
\end{align}
Combining \eqref{eq_a1}, \eqref{eq_a2} and \eqref{eq_a3} we obtain
\begin{align*}
\var[\tr_\rho(f)] &= \frac12 \int_B \int_\CC \big[\nabla f(z) \cdot (w-z)\big]^2 |K_\rho(z,w)|^2 \, dA(w) dA(z) + O(\rho^{-1/2})
\\
&=\frac12 \rho^2 \int_B \int_\CC \big[\nabla f(z) \cdot (w-z)\big]^2 \phi(\sqrt{\rho}(z-w))^2 \, dA(w) dA(z) + O(\rho^{-1/2})
\\
&=\frac12 \rho^2 \int_B \int_\CC \big[\nabla f(z) \cdot u\big]^2 \phi(\sqrt{\rho}u)^2 \, dA(u) dA(z) + O(\rho^{-1/2})
\\
&=\frac12 \int_B \rho \int_\CC \big[\nabla f(z) \cdot (\sqrt{\rho} u)\big]^2 \phi(\sqrt{\rho}u)^2 \, dA(u) dA(z) + O(\rho^{-1/2})
\\
&=\frac12 \int_\CC \int_\CC \big[\nabla f(z) \cdot u\big]^2 \phi(u)^2 \, dA(u) dA(z) + O(\rho^{-1/2}).
\end{align*}
We now invoke Theorem \ref{thm:main} (part (i)). Suitable envelopes are provided by $\phi_\rho(z) = \rho \phi(\sqrt{\rho}z)$.
\end{proof}
\begin{rem}\label{rem_dil}
If we consider a reproducing kernel $K$ acting on $L^2(\mathbb{R}^d)$, satisfying $|K(z,w)|=\phi(z-w)$, and dilate it by $K_\rho(z,w) = \rho^{d/2} K\big(\sqrt{\rho}(z-w)\big)$, then under similar assumptions as in Corollary \ref{cor:main} reinspection of the proof of Corollary \ref{cor:main} shows
\begin{align}\label{eq_lll}
\var[\tr_\rho(f)] = \frac12 \rho^{d/2-1} \int_{\mathbb{R}^d} \int_{\mathbb{R}^d} \big[\nabla f(z) \cdot u\big]^2 \phi(u)^2 \, dA(u) dA(z) + O(\rho^{(d-3)/2}).
\end{align}
Thus, for $d>2$, whenever the integral in \eqref{eq_lll} does not vanish, $\var[\tr_\rho(f)]$ is of order $\rho^{d/2-1}$ for large $\rho$. On the other hand, $\mathbb{E}[\tr_\rho(|f|)]=\rho^{d/2} \cdot \phi(0) \cdot \int |f|$. Thus, \eqref{eq:variance_cond} is satisfied for some $\delta>0$, and Soshnikov's asymptotic normality theorem is applicable.
\end{rem}

Similarly, we now note that if we relax the assumptions of Corollary \ref{cor:main} to allow for non-reproducing kernels, then each such kernel leads to smooth observables with variance of order at least $\rho$ (and therefore falls within the scope of Soshnikov's normality theorem).

\begin{prop}\label{prop_non}
Let $K: \CC \times \CC \to \CC$ be a continuous Hermitian symmetric kernel acting boundedly on $L^2(\CC, dA)$ and satisfying \eqref{A} with $\phi$ continuous and bounded. Instead of assuming the reproducing property, suppose only that the spectrum of $T_K$, the integral operator with kernel $K$, satisfies 
\begin{align}\label{C}
\sigma(T_K) \subset [0,1],
\end{align}
so that there exists a unique DPP associated with $K$. Define the dilated kernels $K_\rho$ by \eqref{eq_dilated}. 

Suppose that $K$ is not reproducing (i.e., $T_K \not= T_K^2$). Then, for every non-zero compactly supported $C^3$ test function $f$, 
\begin{align}\label{eq_C}
\liminf_{\rho\to\infty} \frac{1}{\rho} \var[\tr_\rho(f)] >0.
\end{align}
\end{prop}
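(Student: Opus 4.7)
The plan is to extract the leading-in-$\rho$ behavior of $\var[\tr_\rho(f)]$ directly from the DPP variance formula, reducing \eqref{eq_C} to the strict inequality $\phi(0) > \|\phi\|_{L^2}^2$, which I will then derive from the failure of the reproducing property.

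I would begin from the general variance formula for a DPP (valid without the reproducing assumption),
\begin{align*}
\var[\tr_\rho(f)] = \int_\CC f(z)^2\,K_\rho(z,z)\,dA(z) - \iint_{\CC^2} f(z)\,f(w)\,|K_\rho(z,w)|^2\,dA(z)\,dA(w).
\end{align*}
Substituting $K_\rho(z,z) = \rho\phi(0)$ (non-negative, since it is the one-point intensity) and $|K_\rho(z,w)|^2 = \rho^2 \phi(\sqrt\rho(z-w))^2$, and changing variables $u = \sqrt\rho(z-w)$ in the double integral, gives
\begin{align*}
\var[\tr_\rho(f)] = \rho\phi(0)\int_\CC f^2\,dA - \rho\iint_{\CC^2} f(z)\,f(z - u/\sqrt\rho)\,\phi(u)^2\,dA(z)\,dA(u).
\end{align*}
A preliminary observation is that $\phi \in L^2(\CC)$, which follows from the operator inequality $(1_B T_K 1_B)^2 \leq 1_B T_K 1_B$ combined with taking traces over large balls $B$, yielding $\|\phi\|_{L^2}^2 \leq \phi(0)$. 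Dominated convergence then applies to the inner integral (using that $f$ is compactly supported) and gives
\begin{align*}
\iint_{\CC^2} f(z)\,f(z - u/\sqrt\rho)\,\phi(u)^2\,dA(z)\,dA(u) \,\longrightarrow\, \|\phi\|_{L^2}^2 \int_\CC f^2\,dA,
\end{align*}
so $\var[\tr_\rho(f)] = \rho\bigl[\phi(0) - \|\phi\|_{L^2}^2\bigr]\int_\CC f^2\,dA + o(\rho)$.

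The main obstacle is then establishing the strict positivity $\phi(0) > \|\phi\|_{L^2}^2$ under the hypothesis that $K$ is not reproducing. From $\sigma(T_K) \subset [0,1]$ one has $T_K - T_K^2 \geq 0$, and its integral kernel
\begin{align*}
S(z,w) := K(z,w) - \int_\CC K(z,u)\,K(u,w)\,dA(u)
\end{align*}
is continuous (by continuity of $K$ together with $\phi \in L^2$) with constant diagonal $S(z,z) = \phi(0) - \|\phi\|_{L^2}^2 \geq 0$. If this diagonal vanished, I would approximate $\delta$-masses at any two points $z_1, z_2 \in \CC$ by normalized $L^2$ bump functions $\eta_{z_j,\varepsilon}$ and pass to the limit in the positivity $\langle (T_K - T_K^2)(c_1\eta_{z_1,\varepsilon} + c_2\eta_{z_2,\varepsilon}),\,c_1\eta_{z_1,\varepsilon} + c_2\eta_{z_2,\varepsilon}\rangle \geq 0$, using the continuity of $S$, to obtain
\begin{align*}
|c_1|^2 S(z_1,z_1) + |c_2|^2 S(z_2,z_2) + 2\,\mathrm{Re}\bigl(\bar c_1 c_2\,S(z_1,z_2)\bigr) \geq 0 \quad \text{for all } c_1,c_2 \in \CC.
\end{align*}
With vanishing diagonal, this inequality forces $S \equiv 0$, i.e., $T_K = T_K^2$, contradicting the non-reproducing hypothesis. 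Combining $\phi(0) > \|\phi\|_{L^2}^2$ with $\int_\CC f^2\,dA > 0$ (since $f \not\equiv 0$) then yields \eqref{eq_C}.
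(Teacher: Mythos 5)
Your proof is correct and follows the same overall skeleton as the paper's: start from the DPP variance identity, rescale to reduce \eqref{eq_C} to the strict inequality $\phi(0)>\int_\CC\phi^2\,dA$, and derive that inequality from the positivity of $T_K-T_K^2$ together with the non-reproducing hypothesis. The execution differs in two ways worth noting. First, the paper does not compute asymptotics at all: it quotes the variance formula in the form $\tfrac12\iint[f(z)-f(w)]^2|K_\rho|^2+\int f^2\,[K_\rho(z,z)-\int|K_\rho(z,\cdot)|^2]$, drops the first (nonnegative) term, and obtains the one-line lower bound $\var[\tr_\rho(f)]\geq\rho\,\int f^2\,dA\,[\phi(0)-\int\phi^2\,dA]$; your dominated-convergence computation gives the sharper statement $\var[\tr_\rho(f)]=\rho[\phi(0)-\|\phi\|_{L^2}^2]\int f^2+o(\rho)$, and your preliminary compression/trace argument that $\|\phi\|_{L^2}^2\leq\phi(0)<\infty$ makes explicit an integrability point the paper leaves implicit. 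Second, for the strict inequality the paper argues via traces: equal diagonals of the kernels of $T_K$ and $T_K^2$, positivity of $T_K-T_K^2$, and the identification of the trace of a positive operator with continuous kernel with the integral of its diagonal (citing Simon) force $T_K=T_K^2$; you instead prove the same fact ("a positive continuous kernel with vanishing diagonal vanishes") by an elementary two-point bump approximation, which is more self-contained. One small repair: normalize your bumps in $L^1$ (or divide the quadratic form by $|B_\varepsilon|$) before letting $\varepsilon\to0$; with $L^2$-normalized indicators the form $\langle(T_K-T_K^2)\eta,\eta\rangle$ itself tends to $0$, so the displayed limit inequality only follows after this rescaling --- a trivial fix that does not affect the argument.
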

\begin{proof}
First note that the operator $T_K$ is locally trace class by assumption \eqref{A}. This fact along with \eqref{C} guarantee the existence of a unique DPP associated with $K$ (see Remark \ref{rem_ltc}). 

Let $f$ be a non-zero compactly supported $C^3$ test function. Taking into account that $K$ is not reproducing, we write $\var[\tr_\rho(f)]$ as
\begin{align*}
&\frac12 \int_{\CC^2} \big[f(z)-f(w)\big]^2 |K_\rho(z,w)|^2 \, dA(z,w) + \int_\CC f(z)^2 \Big[K_\rho(z,z)-\int_\CC |K_\rho(z,w)|^2 \,dA(w) \Big] \,dA(z)
\\
&\qquad \geq  \int_\CC f(z/\sqrt{\rho})^2 \Big[K(z,z)-\int_\CC |K(z,w)|^2 \,dA(w) \Big] \,dA(z)
\\
&\qquad=\rho \cdot \int_\CC f^2 \,dA \cdot \big[\phi(0)-\int_\CC \phi^2\,dA\big],
\end{align*}
where the first line is a general fact \cite[Equation 5]{soshnikov2002gaussian}, and the last equality follows from \eqref{A}. 

Since $f$ is non-zero, to prove \eqref{eq_C}, we only need to observe that for non-reproducing $K$, $\phi(0)-\int_\CC \phi^2\,dA \not=0$. Suppose on the contrary that $\phi(0)=\int_\CC \phi^2\,dA$. Then \[K(z,z)=\int_\CC K(z,w) K(w,z) \,dA(w), \qquad z \in \CC.\] This means that the diagonals of the integral kernels of the operators $T_K$ and $T_K^2$ coincide. On the other hand, by \eqref{C}, $T_K -T_k^2$ is a positive operator. Thus $\mathrm{trace}(T_K-T_K^2)=0$ and $T_K=T_K^2$ --- see, e.g., \cite[Theorem 2.1]{simon} --- which contradicts the assumption that $K$ is not reproducing.
\end{proof}


\begin{thebibliography}{10}
	
	\bibitem{abreu2016weyl}
	L.~D. Abreu, J.~M. Pereira, J.~L. Romero, and S.~Torquato.
	\newblock The {W}eyl-{H}eisenberg ensemble: hyperuniformity and higher {L}andau
	levels.
	\newblock {\em J. Stat. Mech. Theory Exp.}, (4):043103, 16, 2017.
	
	\bibitem{ameur2010berezin}
	Y.~Ameur, H.~Hedenmalm, and N.~Makarov.
	\newblock Berezin transform in polynomial {B}ergman spaces.
	\newblock {\em Comm. Pure Appl. Math.}, 63(12):1533--1584, 2010.
	
	\bibitem{ameur2011fluctuations}
	Y.~Ameur, H.~Hedenmalm, and N.~Makarov.
	\newblock Fluctuations of eigenvalues of random normal matrices.
	\newblock {\em Duke Math. J.}, 159(1):31--81, 2011.
	
	\bibitem{ameur2015random}
	Y.~Ameur, H.~Hedenmalm, and N.~Makarov.
	\newblock Random normal matrices and {W}ard identities.
	\newblock {\em Ann. Probab.}, 43(3):1157--1201, 2015.
	
	\bibitem{MR3903320}
	R.~J. Berman.
	\newblock Determinantal point processes and fermions on polarized complex
	manifolds: bulk universality.
	\newblock In {\em Algebraic and analytic microlocal analysis}, volume 269 of
	{\em Springer Proc. Math. Stat.}, pages 341--393. Springer, Cham, 2018.
	
	\bibitem{MR2932631}
	A.~Borodin.
	\newblock Determinantal point processes.
	\newblock In {\em The {O}xford handbook of random matrix theory}, pages
	231--249. Oxford Univ. Press, Oxford, 2011.
	
	\bibitem{costin1995gaussian}
	O.~Costin and J.~L. Lebowitz.
	\newblock Gaussian fluctuation in random matrices.
	\newblock {\em Phys. Rev. Lett.}, 75(1):69--72, 1995.
	
	\bibitem{dinh2021quantitative}
	T.-C. Dinh, S.~Ghosh, H.-S. Tran, and M.-H. Tran.
	\newblock Quantitative {M}arcinkiewicz's theorem and central limit theorems:
	applications to spin systems and point processes.
	\newblock {\em arXiv preprint:2107.08469v2}.
	
	\bibitem{MR4418712}
	M.~Fenzl and G.~Lambert.
	\newblock Precise deviations for disk counting statistics of invariant
	determinantal processes.
	\newblock {\em Int. Math. Res. Not. IMRN}, (10):7420--7494, 2022.
	
	\bibitem{folland89}
	G.~B. Folland.
	\newblock {\em Harmonic analysis in phase space}, volume 122 of {\em Annals of
		Mathematics Studies}.
	\newblock Princeton University Press, Princeton, NJ, 1989.
	
	\bibitem{MR173726}
	J.~Ginibre.
	\newblock Statistical ensembles of complex, quaternion, and real matrices.
	\newblock {\em J. Mathematical Phys.}, 6:440--449, 1965.
	
	\bibitem{haimi2014bulk}
	A.~Haimi.
	\newblock Bulk asymptotics for polyanalytic correlation kernels.
	\newblock {\em J. Funct. Anal.}, 266(5):3083--3133, 2014.
	
	\bibitem{haimi2013polyanalytic}
	A.~Haimi and H.~Hedenmalm.
	\newblock The polyanalytic {G}inibre ensembles.
	\newblock {\em J. Stat. Phys.}, 153(1):10--47, 2013.
	
	\bibitem{haimi2016central}
	A.~Haimi and A.~Wennman.
	\newblock A central limit theorem for fluctuations in polyanalytic {G}inibre
	ensembles.
	\newblock {\em Int. Math. Res. Not. IMRN}, (5):1350--1372, 2019.
	
	\bibitem{MR2552864}
	J.~B. Hough, M.~Krishnapur, Y.~Peres, and B.~Vir\'{a}g.
	\newblock {\em Zeros of {G}aussian analytic functions and determinantal point
		processes}, volume~51 of {\em University Lecture Series}.
	\newblock American Mathematical Society, Providence, RI, 2009.
	
	\bibitem{MR2581882}
	K.~Johansson.
	\newblock Random matrices and determinantal processes.
	\newblock In {\em Mathematical statistical physics}, pages 1--55. Elsevier B.
	V., Amsterdam, 2006.
	
	\bibitem{macchi1975coincidence}
	O.~Macchi.
	\newblock The coincidence approach to stochastic point processes.
	\newblock {\em Advances in Appl. Probability}, 7:83--122, 1975.
	
	\bibitem{matsui2021local}
	T.~Matsui, M.~Katori, and T.~Shirai.
	\newblock Local number variances and hyperuniformity of the {H}eisenberg family
	of determinantal point processes.
	\newblock {\em J. Phys. A}, 54(16):Paper No. 165201, 22, 2021.
	
	\bibitem{nourdin2012normal}
	I.~Nourdin and G.~Peccati.
	\newblock {\em Normal approximations with {M}alliavin calculus}, volume 192 of
	{\em Cambridge Tracts in Mathematics}.
	\newblock Cambridge University Press, Cambridge, 2012.
	\newblock From Stein's method to universality.
	
	\bibitem{MR2346510}
	B.~Rider and B.~Vir\'{a}g.
	\newblock Complex determinantal processes and {$H^1$} noise.
	\newblock {\em Electron. J. Probab.}, 12:no. 45, 1238--1257, 2007.
	
	\bibitem{rider2007noise}
	B.~Rider and B.~Vir\'{a}g.
	\newblock The noise in the circular law and the {G}aussian free field.
	\newblock {\em Int. Math. Res. Not. IMRN}, (2):Art. ID rnm006, 33, 2007.
	
	\bibitem{shirai2015ginibre}
	T.~Shirai.
	\newblock Ginibre-type point processes and their asymptotic behavior.
	\newblock {\em J. Math. Soc. Japan}, 67(2):763--787, 2015.
	
	\bibitem{simon}
	B.~Simon.
	\newblock {\em Trace ideals and their applications}, volume 120 of {\em
		Mathematical Surveys and Monographs}.
	\newblock American Mathematical Society, Providence, RI, second edition, 2005.
	
	\bibitem{soshnikov2000determinantal}
	A.~Soshnikov.
	\newblock Determinantal random point fields.
	\newblock {\em Uspekhi Mat. Nauk}, 55(5(335)):107--160, 2000.
	
	\bibitem{soshnikov2002gaussian}
	A.~Soshnikov.
	\newblock Gaussian limit for determinantal random point fields.
	\newblock {\em Ann. Probab.}, 30(1):171--187, 2002.
	
	\bibitem{vas00}
	N.~L. Vasilevski.
	\newblock Poly-{F}ock spaces.
	\newblock In {\em Differential operators and related topics, {V}ol. {I}
		({O}dessa, 1997)}, volume 117 of {\em Oper. Theory Adv. Appl.}, pages
	371--386. Birkh\"{a}user, Basel, 2000.
	
\end{thebibliography}
\end{document}